\shorttitle{Convergence of multi-class systems of fixed sizes}
\newcommand{\EE}{\kern1pt\mathbf{E}\kern1pt}
\newcommand{\PP}{\kern1pt\mathbf{P}\kern1pt}
\newcommand\One{{\mathrm{1} \kern -0.27em \mathrm{I}}}
\begin{document}

\title{Convergence of multi-class systems of fixed possibly infinite sizes}

\author[{\'E}cole Polytechnique, CNRS]{Carl Graham}
\address{CMAP, {\'E}cole Polytechnique, CNRS, 91128 Palaiseau France.}
\email{carl@cmapx.polytechnique.fr}

\begin{abstract}
Multi-class systems having possibly both finite and infinite classes are investigated under a natural partial exchangeability assumption. It is proved that the conditional law of such a system, given the vector of the empirical measures of its finite classes and directing measures of its infinite ones (given by the de Finetti Theorem), corresponds to sampling \emph{independently} from each class, without replacement from the finite classes and i.i.d.\ from the directing measure for the infinite ones. The equivalence between the convergence of multi-exchangeable systems with fixed class sizes and the convergence of the corresponding vectors of measures is then established.
\end{abstract}

\keywords{Interacting particle systems; multi-class; multi-type; multi-species; mixtures; 
partial exchangeability; empirical measures; 
de Finetti Theorem
}

\ams{60K35}{60B10; 60G09; 62B05}

\section{Introduction}

Kallenberg~\cite{Kallenberg}, Kingman~\cite{Kingman}, Diaconis and Freedman~\cite{Diaconis},
and Aldous~\cite{Aldous} are among many studies of exchangeable random variables (r.v.)\ with Polish state spaces, a fundamental topic in many fields of probability and statistics. Nevertheless, many models in stratified sampling, statistical mechanics, chemistry,
communication networks, biology, etc., actually involve \emph{varied} classes of similar objects, which we call ``particles''. Some examples can be found in Graham~\cite{Graham:92, Graham:00} and Graham and Robert~\cite{GrahamRobert}. 

Several classical results for exchangeable systems are extended to such multi-class particle systems in Graham~\cite{Graham:08}, under a natural partial exchangeability assumption called \emph{multi-exchangeability}. It is notably proved that the convergence in law of a family of \emph{finite} systems, with limit a system with \emph{infinite} class sizes, is equivalent to the convergence in law of the corresponding family of vectors of the empirical measures within each class, with limit the vector of the directing measures of each limit class (which is exchangeable, and the directing measure is given by the de Finetti theorem). This result allows use of compactness-uniqueness techniques on the measure vectors for convergence proofs, extending those for propagation of chaos proofs for exchangeable systems introduced by Sznitman~\cite{Sznitman} and developed among others by M{\'e}l{\'e}ard~\cite{Meleard:96} and Graham~\cite{Graham:92,Graham:00}.
We refer to \cite{Graham:08} for further motivation.

In the present paper, we extend this convergence result to multi-exchangeable systems with \emph{fixed} class sizes, which may be both finite and infinite, by showing that their convergence in law is equivalent to the convergence in law for the vectors of measures with components the empirical measures for finite classes and the directing measure for infinite classes (given by the de Finetti Theorem). 
We first extend another result of \cite{Graham:08}, and prove that the conditional law of a multi-echangeable system, given the vector of measures defined above, corresponds to sampling \emph{independently} without replacement from \emph{each} finite class and i.i.d.\ from the directing measure for \emph{each} infinite class, the remarkable fact being independence between \emph{different} classes.

These new results are related to, and combine well with, those of \cite{Graham:08}. For instance, results in \cite{Graham:08} can yield a tractable large size limit for a finite model of interest, one hopes that the long-time behavior of the finite model is well approximated by that of the limit, and the new results can help us study all the long-time behaviors and their relationship. They can be likewise used for \emph{e.g.} fluid limits or diffusion approximations. 

All state spaces $\mathcal{S}$ are Polish, and the weak topology is used for the space of probability measures $\mathcal{P}(\mathcal{S})$ which is then also Polish, as are products of Polish spaces. For $k \ge1$ we denote by $\Sigma(k)$ the set of permutations of $\{1,\ldots,k\}$, and by $\Sigma(\infty)$ the set of permutations of $\{1,2, \dots\}$ with \emph{finite} support. 

\section{Some combinatorial facts}

\begin{theorem}
\label{combfact}
Let $1 \le k \le N$ and $(N)_k = N(N-1)\cdots(N-k+1)$. Sampling $k$ times without replacement among $N$ possibly not distinct points $x_1, \dots, x_N \in \mathcal{S}$ corresponds to sampling from the law
\[
\lambda^{N,k}(x_1, \ldots, x_N)
=
{1 \over (N)_k} 
\sum_{ \substack{ 1 \le n_1,\ldots, n_k \le N \\ \mathrm{distinct}}}
\delta_{ (x_{n_1}, \ldots, x_{n_k}) }
\in \mathcal{P}(\mathcal{S}^k)
\]
which is a continuous function (for the weak and the total variation topologies) of \[
\lambda^{N,1}(x_1, \ldots, x_N) = {1 \over N} \sum_{n=1}^N \delta_{x_n} \in \mathcal{P}(\mathcal{S})\,.
\]
More precisely, $\lambda^{N,k}(x_1, \ldots, x_N)$ can be written as a sum of continuous linear functions of
$\lambda^{N,1}(x_1, \ldots, x_N)^{\otimes j} = \left({1 \over N} \sum_{n=1}^N \delta_{x_n}\right)^{\otimes j}$ for $1\le j \le k$ by an exclusion-inclusion formula.
Sampling $k$ times with replacement corresponds to using the law
$
\lambda^{N,1}(x_1, \ldots, x_N)^{\otimes k}
=
\left({1 \over N} \sum_{n=1}^N \delta_{x_n}\right)^{\otimes k}$.
\end{theorem}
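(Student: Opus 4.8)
The plan is to dispatch the two extreme assertions directly and then concentrate on the continuity and linear structure, whose engine is an inclusion-exclusion over the coincidence patterns of the sampled indices. Sampling $k$ times without replacement among the $N$ labelled points amounts, by definition, to drawing an ordered $k$-tuple of \emph{distinct} indices $(n_1,\ldots,n_k)$ uniformly among the $(N)_k$ possibilities and returning $(x_{n_1},\ldots,x_{n_k})$, so the sample law is exactly $\lambda^{N,k}(x_1,\ldots,x_N)$; sampling with replacement makes the $k$ draws independent and uniform among the points, producing the product law $\lambda^{N,1}(x_1,\ldots,x_N)^{\otimes k}$. The remaining content is that $\lambda^{N,k}$ is a continuous function of $\lambda^{N,1}$ with the announced linear structure.

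First I would expand the product law as a sum over \emph{all} index tuples, $N^k \lambda^{N,1}{}^{\otimes k} = \sum_{1 \le n_1,\ldots,n_k \le N} \delta_{(x_{n_1},\ldots,x_{n_k})}$, and split it according to the set partition $\pi$ of $\{1,\ldots,k\}$ induced by the coincidences $n_i = n_j$. For such a $\pi$, with blocks $B_1,\ldots,B_{|\pi|}$, let $R_\pi \colon \mathcal{S}^{|\pi|} \to \mathcal{S}^k$ be the continuous coordinate-repetition map sending a tuple of block values to the $k$-tuple whose $i$-th entry is the value of the block containing $i$. Summing over the tuples that are constant on each block of $\pi$ --- equivalently, those whose induced partition is coarser than or equal to $\pi$ --- gives the pushforward $(R_\pi)_* \bigl(N^{|\pi|} \lambda^{N,1}{}^{\otimes |\pi|}\bigr)$, since the block values then range freely over $\{1,\ldots,N\}^{|\pi|}$.

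The key step is to invert this relation so as to isolate the contribution of the finest partition $\hat{0}$ (all singletons), whose associated tuples are exactly those with distinct indices and whose term is therefore $(N)_k \lambda^{N,k}$. Möbius inversion on the partition lattice produces the explicit exclusion-inclusion formula
\[
(N)_k \lambda^{N,k} = \sum_{\pi} \mu(\hat{0},\pi)\, N^{|\pi|}\, (R_\pi)_* \bigl(\lambda^{N,1}{}^{\otimes |\pi|}\bigr),
\]
the sum running over all set partitions $\pi$ of $\{1,\ldots,k\}$ with $\mu$ the lattice Möbius function; grouping the terms by $j = |\pi|$ displays $\lambda^{N,k}$ as the claimed sum of continuous linear functions of the powers $\lambda^{N,1}{}^{\otimes j}$, $1 \le j \le k$. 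The same conclusion follows by induction on $k$, solving $N^k \lambda^{N,1}{}^{\otimes k} = \sum_{\pi} (R_\pi)_* \bigl((N)_{|\pi|} \lambda^{N,|\pi|}\bigr)$ for its $\pi = \hat{0}$ term and using that every other $\pi$ has $|\pi| < k$.

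Continuity in both topologies is then read off the formula, $N$ being fixed: the tensor-power map $\nu \mapsto \nu^{\otimes j}$ on $\mathcal{P}(\mathcal{S})$ is continuous for the weak topology and Lipschitz for total variation, while each pushforward $(R_\pi)_*$ is weakly continuous (as $R_\pi$ is continuous) and nonexpansive for total variation, so a finite linear combination is continuous in each topology. I expect the only real difficulty to be bookkeeping: arranging the partition-indexed decomposition and the repetition maps so that each coincidence sum is correctly identified as a pushforward of a tensor power, and pinning down the Möbius coefficients. Once the decomposition of $N^k \lambda^{N,1}{}^{\otimes k}$ is in place, both the inversion and the continuity are routine.
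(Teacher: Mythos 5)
Your proposal is correct, and its engine --- expanding $N^k\,\lambda^{N,1}(x_1,\ldots,x_N)^{\otimes k}=\sum_{1\le n_1,\ldots,n_k\le N}\delta_{(x_{n_1},\ldots,x_{n_k})}$ and splitting according to the coincidence pattern of the indices --- is exactly the paper's identity \eqref{deckj}, just organized by set partitions. The difference lies in how the relation is inverted. The paper isolates the distinct-index term $(N)_k N^{-k}\lambda^{N,k}$, observes that the sum over tuples with exactly $j<k$ distinct values is a continuous linear function of $\lambda^{N,j}$, and concludes by recurrence over $k$, never computing any coefficients. You instead perform the inversion in closed form via the M\"obius function of the partition lattice, writing $(N)_k\lambda^{N,k}=\sum_{\pi}\mu(\hat{0},\pi)\,N^{|\pi|}(R_\pi)_*\bigl(\lambda^{N,1}{}^{\otimes|\pi|}\bigr)$ with explicit pushforwards under the repetition maps $R_\pi$; this is valid (your identification of the coincidence sums as $(R_\pi)_*\bigl(N^{|\pi|}\lambda^{N,1}{}^{\otimes|\pi|}\bigr)$ for tuples whose induced partition is coarser than or equal to $\pi$ is the standard M\"obius setup, and the coefficients are $\mu(\hat{0},\pi)=(-1)^{k-|\pi|}\prod_{B\in\pi}(|B|-1)!$ if one cares to evaluate them). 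Note that your closing remark --- solving $N^k\lambda^{N,1}{}^{\otimes k}=\sum_\pi(R_\pi)_*\bigl((N)_{|\pi|}\lambda^{N,|\pi|}\bigr)$ for the $\pi=\hat{0}$ term by induction on $k$ --- \emph{is} the paper's proof, so your argument contains it as a fallback. What the M\"obius route buys is the literal ``exclusion-inclusion formula'' promised in the statement, with the continuous linear maps exhibited explicitly; what the paper's recursion buys is brevity, since continuity in both topologies (your observations that $\nu\mapsto\nu^{\otimes j}$ is weakly continuous and total-variation Lipschitz, and that $(R_\pi)_*$ is weakly continuous and nonexpansive, are accurate) requires only the existence of the decomposition, not its coefficients.
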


\begin{proof}
The statements about the laws used for sampling without and with replacement are obvious.
Sampling from $\lambda^{N,k}(x_1, \ldots, x_N)$ corresponds to sampling $k$ times without replacement from the atoms of ${1 \over N} \sum_{n=1}^N \delta_{x_n}$ counted with their multiplicities, and this fact clearly implies the continuity statement. More precisely,
\begin{align}
\label{deckj}
\left({1 \over N} \sum_{n=1} \delta_{x_n}\right)^{\otimes k}
&=
{1 \over N^k}\sum_{1 \le n_1,\ldots, n_k\le N} \delta_{ (x_{n_1}, \ldots, x_{n_k}) }
\nonumber \\
&=
{(N)_k \over N^k}\, \lambda^{N,k}(x_1, \ldots, x_N)
+ {1 \over N^k}\sum_{j=1}^{k-1}
\sum_{ \substack{ 1 \le n_1,\ldots, n_k\le N \\ \mathrm{Card}\{n_1, \ldots, n_k\} = j }}
\delta_{ (x_{n_1}, \ldots, x_{n_k}) }
\end{align}
where the term of index $j$ in the sum
is a continuous linear function of $\lambda^{N,j}(x_1, \ldots, x_N)$, and since $j\le k-1$ we
conclude by recurrence over $k\ge1$.
\end{proof}

The following result quantifies the difference between sampling with and without replacement
in variation norm $\Vert \mu \Vert = \sup\{\,\langle \phi, \mu \rangle : \Vert \phi \Vert_\infty \le 1\,\}$.
See also \cite[Prop.~5.6 p.~39]{Aldous} and \cite[Theorem~13 p.~749]{Diaconis}.

\begin{theorem}
\label{combfactvar}
With the notations of Theorem~\ref{combfact}, we have
\[
\left\Vert 
\lambda^{N,k}(x_1, \ldots, x_N)
-
\lambda^{N,1}(x_1, \ldots, x_N)^{\otimes k}
\right\Vert
\le 2 {N^k - (N)_k  \over N^k}  \le {k(k-1) \over N}
\]

and the first inequality is an equality if and only if the $x_1$, \dots\,, $x_N$ are distinct.
\end{theorem}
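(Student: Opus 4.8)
The plan is to read the first inequality directly off the inclusion--exclusion decomposition \eqref{deckj} already established in Theorem~\ref{combfact}. Rearranging that identity, I would write the signed measure as a sum of a negative and a positive part,
\[
\lambda^{N,1}(x_1,\ldots,x_N)^{\otimes k} - \lambda^{N,k}(x_1,\ldots,x_N) = X + Y \,,
\]
where $X = -\frac{N^k-(N)_k}{N^k}\,\lambda^{N,k}(x_1,\ldots,x_N)$ is a nonpositive measure of variation norm $\frac{N^k-(N)_k}{N^k}$, carried by the position tuples $(x_{n_1},\ldots,x_{n_k})$ coming from index tuples with $k$ distinct entries, and $Y = \frac{1}{N^k}\sum_{j=1}^{k-1}\sum_{\mathrm{Card}\{n_1,\ldots,n_k\}=j}\delta_{(x_{n_1},\ldots,x_{n_k})}$ is a nonnegative measure carried by the tuples coming from index tuples with a repetition. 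Counting the $N^k-(N)_k$ repeated index tuples shows that $Y$ has total mass, hence variation norm, also equal to $\frac{N^k-(N)_k}{N^k}$, so the triangle inequality $\Vert X+Y\Vert \le \Vert X\Vert + \Vert Y\Vert$ yields the first inequality.

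For the second inequality I would factor $(N)_k/N^k = \prod_{i=0}^{k-1}(1 - i/N)$ and apply the elementary product bound $\prod_{i=0}^{k-1}(1-a_i) \ge 1 - \sum_{i=0}^{k-1} a_i$, valid since each $a_i = i/N$ lies in $[0,1]$ as $i \le k-1 \le N-1$. This gives $1 - (N)_k/N^k \le \sum_{i=0}^{k-1} i/N = \frac{k(k-1)}{2N}$, and doubling produces the stated bound $\frac{k(k-1)}{N}$.

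The delicate point, which I expect to be the main obstacle, is the equality characterization. Since $X \le 0 \le Y$, equality in the triangle inequality holds if and only if $X$ and $Y$ are mutually singular, that is, no position tuple lies in the support of both. If the $x_n$ are distinct, the map $(n_1,\ldots,n_k)\mapsto(x_{n_1},\ldots,x_{n_k})$ is injective, so tuples built from distinct indices have pairwise distinct coordinates while tuples built from repeated indices carry a repeated coordinate; the two supports are then disjoint and equality holds. Conversely, if $x_a=x_b$ for some $a\ne b$, then for $k\ge2$ (completing the tuple with further distinct indices, which is possible since $k\le N$) the position tuple $(x_a,x_a,x_{n_3},\ldots,x_{n_k})$ is reached both from a distinct index tuple, contributing to $X$, and from a repeated one, contributing to $Y$; the cancellation at that common atom makes $\Vert X+Y\Vert$ strictly smaller than $\Vert X\Vert+\Vert Y\Vert$, so the inequality is strict. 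I would note that the case $k=1$ is degenerate, both sides vanishing identically, so the equivalence is to be understood for $k\ge2$.
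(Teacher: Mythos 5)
Your proposal is correct and follows essentially the same route as the paper: both read off the decomposition in \eqref{deckj} that $\lambda^{N,1}(x_1,\ldots,x_N)^{\otimes k} - \lambda^{N,k}(x_1,\ldots,x_N)$ splits into a nonpositive part and a nonnegative part each of variation norm $(N^k-(N)_k)/N^k$, deduce the bound $2(N^k-(N)_k)/N^k$ and the equality characterization from mutual singularity of the two atomic pieces, and your product bound $\prod_{i=0}^{k-1}(1-i/N) \ge 1 - \sum_{i=0}^{k-1} i/N$ is equivalent to the paper's count $N^k-(N)_k \le \binom{k}{2}N\,N^{k-2}$ for the second inequality. Your observation that the case $k=1$ is degenerate (both sides vanish, so the ``only if'' direction of the equality claim holds only for $k\ge 2$) is a legitimate refinement that the paper's terse proof leaves implicit.
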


\begin{proof}
Equation \eqref{deckj} yields the first inequality for the variation norm, and the condition for it to be an equality. The second inequality follows by bounding $N^k - (N)_k$ by counting $k(k-1)/2$ possible positions for two identical indices with $N$ choices and $N^{k-2}$ choices for the other $k-2$ positions. 
\end{proof}

\section{Multi-exchangeable systems}

\subsection{Reminders on exchangeable systems}
\label{ss:rem}

Let $N \in \mathbb{N} = \{0,1,\dots\}$ be fixed. A \emph{finite} system $(X_n)_{1\le n \le N}$ of random variables (r.v.) with state space $\mathcal{S}$ is \emph{exchangeable} if 
\[
\mathcal{L}(X_{\sigma(1)}, \ldots, X_{\sigma(N)}) 
= \mathcal{L}(X_1, \ldots, X_N)\,,
\qquad
\forall \sigma \in \Sigma(N)\,.
\]
The \emph{empirical measure} of the system is the random probability measure 
\begin{equation}
\label{emplaw}
\Lambda^N 
= \lambda^{N,1}(X_1, \ldots, X_N)
= {1\over N} \sum_{n=1}^{N} \delta_{X_n} 
\end{equation}
with samples in $\mathcal{P}(\mathcal{S})$. 
The conditional law of such an exchangeable system given its empirical meausure $\Lambda^N$ is $\lambda^{N,N}(X_1, \ldots, X_N)$, see Theorem~\ref{combfact} for the definition of $\lambda^{N,N}$ and \emph{e.g.}\ Aldous~\cite[Lemma 5.4 p.~38]{Aldous} for the result.

An \emph{infinite} system $(X_n)_{n \ge 1}$ is \emph{exchangeable} if every 
finite subsystem $(X_n)_{1 \le n \le N}$ is exchangeable. The de Finetti Theorem, 
see \emph{e.g.} \cite{Kallenberg, Kingman,Diaconis,Aldous}, states that such a system is a mixture of i.i.d.\ sequences, and precisely that its law is of the form 
\[
\int P^{\otimes \infty} \mathcal{L}_{\Lambda^\infty}(dP)
\]
where $\mathcal{L}_{\Lambda^\infty}$ is the law of the random probability measure
\begin{equation}
\label{dirmeas}
\Lambda^\infty 
= \lim_{N \to \infty}\lambda^{N,1}(X_1, \ldots, X_N)
= \lim_{N \to \infty} {1\over N} \sum_{n=1}^{N} \delta_{X_n}
\;\;
\textrm{a.s.}
\end{equation}
called the \emph{directing measure} of the system. 
The conditional law of such a system given $\Lambda^\infty$ 
corresponds to i.i.d.\ draws from $\Lambda^\infty$.

\subsection{Multi-class systems}

In order to consider finite and infinite systems simultaneously, we will take class sizes in $\mathbb{N} \cup \{\infty\}$, and be redundant for clarity.
Let $C \ge 1$ and $N_i \in \mathbb{N} \cup \{\infty\}$ and state spaces $\mathcal{S}_i$ be fixed for $1 \le i \le C$, and consider a \emph{multi-class} system
\begin{equation}
\label{eq:mcys}
(X_{n,i})_{1\le n \le N_i,\, 1\le i \le C}\,,
\qquad
X_{n,i}
\textrm{ with state space $\mathcal{S}_i$},
\end{equation}
where the r.v.\ $X_{n,i}$ is the $n$-th particle, or object, of class $i$, and $1\le n \le \infty$ is interpreted as $n \ge1$.
We say that \eqref{eq:mcys} is a \emph{multi-exchangeable} system if its law is invariant under finite permutations of the particles \emph{within} classes, and precisely if
\[
\mathcal{L}\bigl( (X_{\sigma_i(n),i})_{1\le n \le N_i,\, 1\le i \le C} \bigr)
=
\mathcal{L}\bigl( (X_{n,i})_{1\le n \le N_i,\, 1\le i \le C} \bigr)\,,
\qquad
\forall \sigma_i \in \Sigma(N_i)\,.
\]
This natural assumption means that particles of a class are
statistically indistinguishable, and obviously
implies that $(X_{n,i})_{1\le n \le N_i}$ is exchangeable for all $i$.
It is sufficient to check that it is true 
when all $\sigma_i$ but one are the identity.

For a multi-exchangeable system, a fundamental quantity is the random vector of probability measures, with
samples in $\mathcal{P}(\mathcal{S}_1) \times \cdots \times \mathcal{P}(\mathcal{S}_C)$,
given by
\begin{equation}
  \label{eq:emmeve}
(\Lambda^{N_i}_i)_{1\le i \le C}\,,
\qquad
\Lambda^{N_i}_i 
=
\left\{
\begin{array}{l}
\textrm{the empirical measure given by \eqref{emplaw} if } N_i<\infty
\\
\textrm{the directing measure given by \eqref{dirmeas} if } N_i=\infty\,.
\end{array}
\right.
\end{equation}

The following extends the results in Section~\ref{ss:rem}, as well as 
Graham~\cite[Theorem~1]{Graham:08} and Aldous~\cite[Cor.~3.9 p.~25]{Aldous} in which respectively $N_i<\infty$ and $N_i=\infty$ for all $i$. The remarkable fact is conditional independence between \emph{different} classes.

\begin{theorem}
\label{suffstat}
The conditional law of a multi-exchangeable system \eqref{eq:mcys} given the random measure vector $(\Lambda^{N_i}_i)_{1\le i \le C}$ in \eqref{eq:emmeve} corresponds to drawing \emph{independently} for each class~$i$, if $N_i<\infty$ from $\lambda^{N_i,N_i}(X_{n,1},\dots,X_{n,N_i})$ given in Theorem~\ref{combfact}, \emph{i.e}, without replacement from the atoms of the empirical measure $\Lambda^{N_i}_i = {1\over N_i} \sum_{n=1}^{N_i} \delta_{X_{n,i}}$ counted with their multiplicities, if $N_i=\infty$ in i.i.d.\ fashion from the directing measure $\Lambda^\infty_i$.
\end{theorem}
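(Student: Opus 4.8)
The plan is to identify the conditional law by testing against a measure-determining family of product functions and to realize it as the limit of explicit finite symmetrizations. I would fix finitely many indices $k_i \le N_i$ and bounded continuous $\Phi_i$ on $\mathcal{S}_i^{k_i}$, and set $\Psi = \prod_{i=1}^C \Phi_i(X_{1,i},\dots,X_{k_i,i})$; such products determine both the law of the system and the conditional kernel, so it suffices to compute $\EE[\Psi \mid \mathcal{G}]$ with $\mathcal{G} = \sigma\bigl((\Lambda_i^{N_i})_{1\le i\le C}\bigr)$ and to check that it factorizes in the asserted way.

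For $M \ge \max_i k_i$ let $G_M = \prod_{N_j<\infty}\Sigma(N_j)\times\prod_{N_i=\infty}\Sigma(M)$, acting within each class, and let $\mathcal{H}_M$ be the $G_M$-invariant $\sigma$-field. Multi-exchangeability makes the law $G_M$-invariant, so for the finite group $G_M$ the conditional expectation given its invariant field is the group average, $\EE[\Psi\mid\mathcal{H}_M] = |G_M|^{-1}\sum_{g\in G_M}\Psi\circ g$. Because $G_M$ is a product group and $\Psi$ is a product whose $i$-th factor depends only on class $i$, this average splits into a product of per-class averages; by the counting identity behind Theorem~\ref{combfact} each finite factor equals $\langle\Phi_j,\lambda^{N_j,k_j}(X_{1,j},\dots,X_{N_j,j})\rangle$ and each infinite factor equals $\langle\Phi_i,\lambda^{M,k_i}(X_{1,i},\dots,X_{M,i})\rangle$. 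This product form is exactly where conditional independence across classes originates: it is forced by the product structure of the symmetrization group.

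Next I would let $M\to\infty$. The groups satisfy $G_M\subseteq G_{M+1}$, so the fields $\mathcal{H}_M$ decrease; moreover each $\Lambda_j^{N_j}$ is symmetric and each $\Lambda_i^\infty$, being a Cesàro limit, is invariant under every finite permutation, so all components of the measure vector are $\mathcal{H}_M$-measurable for every $M$ and hence $\mathcal{G}\subseteq\mathcal{H}_\infty := \bigcap_M\mathcal{H}_M$. By reverse martingale convergence $\EE[\Psi\mid\mathcal{H}_M]\to\EE[\Psi\mid\mathcal{H}_\infty]$ in $L^1$ and a.s. On the right-hand side the finite factors are unchanged, while each infinite factor converges a.s.: Theorem~\ref{combfactvar} gives $\bigl\Vert\lambda^{M,k_i}(X_{1,i},\dots,X_{M,i})-(\Lambda_i^M)^{\otimes k_i}\bigr\Vert\le k_i(k_i-1)/M\to0$ with $\Lambda_i^M = {1\over M}\sum_{n\le M}\delta_{X_{n,i}}$, and $\Lambda_i^M\to\Lambda_i^\infty$ weakly a.s.\ by the de Finetti convergence~\eqref{dirmeas}, so $\langle\Phi_i,\lambda^{M,k_i}\rangle\to\langle\Phi_i,(\Lambda_i^\infty)^{\otimes k_i}\rangle$. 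Thus the limit equals $\prod_{N_j<\infty}\langle\Phi_j,\lambda^{N_j,k_j}\rangle\prod_{N_i=\infty}\langle\Phi_i,(\Lambda_i^\infty)^{\otimes k_i}\rangle$, which is $\mathcal{G}$-measurable; since $\mathcal{G}\subseteq\mathcal{H}_\infty$, the tower property yields $\EE[\Psi\mid\mathcal{G}]=\EE[\Psi\mid\mathcal{H}_\infty]$ equal to this product, precisely independent sampling without replacement from each finite class's empirical measure and i.i.d.\ from each infinite class's directing measure.

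The main obstacle I anticipate is the $\sigma$-field bookkeeping in the passage to the limit: the naive fields $\sigma\bigl((\Lambda_i^M)_i\bigr)$ are not monotone in $M$, so one must instead use the symmetric (permutation-invariant) fields $\mathcal{H}_M$ to obtain a genuine reverse martingale, and then close the argument not by proving $\mathcal{H}_\infty=\mathcal{G}$ outright but by the cheaper observation that $\mathcal{G}\subseteq\mathcal{H}_\infty$ together with the manifest $\mathcal{G}$-measurability of the limit, after which the tower property identifies the two conditional expectations. The remaining steps — the factorization via the product group and the variation estimate of Theorem~\ref{combfactvar} turning sampling without replacement into i.i.d.\ sampling — should then be routine.
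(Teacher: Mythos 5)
Your proof is correct, and while its combinatorial engine coincides with the paper's, the mechanism identifying the conditional law is genuinely different. The shared core is the within-class symmetrization: averaging the product test function over the product group $\prod_{N_j<\infty}\Sigma(N_j)\times\prod_{N_i=\infty}\Sigma(M)$ factorizes across classes and produces the factors $\langle\Phi_j,\lambda^{N_j,k_j}\rangle$ and $\langle\Phi_i,\lambda^{M,k_i}\rangle$, after which Theorem~\ref{combfactvar} and the a.s.\ convergence \eqref{dirmeas} convert the infinite-class factors to $\langle\Phi_i,(\Lambda_i^\infty)^{\otimes k_i}\rangle$ --- this is exactly the computation in the paper's display \eqref{cenfor}--\eqref{intfor}. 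Where you diverge is in the packaging: the paper never introduces invariant $\sigma$-fields, but instead multiplies by an arbitrary $g\in C_b(\mathcal{P}(\mathcal{S}_1)\times\cdots\times\mathcal{P}(\mathcal{S}_C),\mathbb{R})$ evaluated at the measure vector, symmetrizes inside a plain expectation (legitimate because $(\Lambda_i^{N_i})_i$ is itself permutation-invariant), passes to the limit $M\to\infty$ by dominated convergence, and concludes by the characteristic property of conditional expectation together with the measurable dependence of $\lambda^{N_i,N_i}$ on $\Lambda_i^{N_i}$ from Theorem~\ref{combfact}. You instead run the classical Aldous-style route: the group-average formula $\EE[\Psi\mid\mathcal{H}_M]=|G_M|^{-1}\sum_{g\in G_M}\Psi\circ g$ for the finite-group invariant field, reverse-martingale convergence along the decreasing fields $\mathcal{H}_M$, and the tower-property trick exploiting $\mathcal{G}\subseteq\mathcal{H}_\infty$ plus $\mathcal{G}$-measurability of the limit. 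The paper's version is more elementary and self-contained (no martingale theory, only dominated convergence); yours buys a.s.\ and $L^1$ convergence of the conditional expectations themselves, and your observation that one need not prove $\mathcal{H}_\infty=\mathcal{G}$ --- which would otherwise demand a Hewitt--Savage-type argument --- is precisely the right economy, cleanly resolving the one point where this route could have stalled. The only loose ends are routine: the group-average identity and invariance should be set up on the canonical space $\prod_i\mathcal{S}_i^{N_i}$, with a version of $\Lambda_i^\infty$ that is exactly (not just a.s.) invariant under finite permutations so that $\mathcal{G}\subseteq\mathcal{H}_M$ holds as stated; neither issue affects the validity of the argument.
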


\begin{proof}
Let $1 \le k \le M<\infty$ be arbitrary and, for $1\le i \le C$,
\begin{equation}
\label{kimi}
k_i = M_i = N_i \text{ if } N_i<\infty\,,
\qquad
k_i = k \text{ and } M_i = M \text{ if } N_i=\infty\,.
\end{equation}
Since $\Lambda^{N_i}_i$ does not change if one applies a permutation to $(X_{n,i})_{1 \le n \le M_i}$, see \eqref{eq:emmeve}, multi-exchangeability implies that for all
$f_i \in C_b(\mathcal{S}_i^{k_i},\mathbb{R})$ 
and 
$g \in C_b(\mathcal{P}(\mathcal{S}_1) \times \cdots \times \mathcal{P}(\mathcal{S}_C),\mathbb{R})$, with the notation in Theorem~\ref{combfact},
\begin{eqnarray}
\label{cenfor}
&&\kern-6.5mm
\EE\left[
g\bigl((\Lambda^{N_i}_i)_{1 \le i \le C}\bigr)
\prod_{i=1}^C 
f_i(X_{1,i}, \ldots X_{k_i,i})
\right]
\nonumber\\
&&\kern-5mm{}
= {1 \over M_1 !}\sum_{\sigma_1 \in \Sigma(M_1)} 
\cdots {1 \over M_C !}\sum_{\sigma_C \in \Sigma(M_C)}
\EE\left[
g\bigl((\Lambda^{N_i}_i)_{1 \le i \le C}\bigr)
\prod_{i=1}^C 
f_i(X_{\sigma_i(1),i}, \ldots X_{\sigma_i(k_i),i})
\right]
\nonumber\\
&&\kern-5mm{}
=
\EE\!\left[
g\bigl((\Lambda^{N_i}_i)_{1 \le i \le C}\bigr)
\prod_{i=1}^C  
{1 \over M_i !} \sum_{\sigma \in \Sigma(M_i) }
f_i(X_{\sigma(1),i}, \ldots, X_{\sigma(k_i),i})
\right]
\nonumber\\
&&\kern-5mm{}
=
\EE\!\left[
g\bigl((\Lambda^{N_i}_i)_{1 \le i \le C}\bigr)
\prod_{i=1}^C  
\left\langle f_i,
\lambda^{M_i,k_i}(X_{1,i},\dots,X_{M_i,i})
\right\rangle
\right].
\end{eqnarray}
Considering \eqref{kimi}, if $N_i < \infty$ then 
$\lambda^{M_i,k_i}(X_{1,i},\dots,X_{N_i,i}) = \lambda^{N_i,N_i}(X_{1,i},\dots,X_{N_i,i})$, and if $N_i = \infty$ then we let $M$ go to infinity and use Theorem~\ref{combfactvar}, \eqref{dirmeas}, continuity, boundedness, and dominated convergence, and we obtain
\begin{eqnarray}
\label{intfor}
&&\kern-11mm
\EE\left[
g\bigl((\Lambda^{N_i}_i)_{1 \le i \le C}\bigr)
\prod_{i=1}^C 
f_i(X_{1,i}, \ldots X_{k_i,i})
\right]
\nonumber\\
&&\kern-9mm{}
=
\EE\!\left[
g\bigl((\Lambda^{N_i}_i)_{1 \le i \le C}\bigr)
\prod_{i=1}^C
\left\langle f_i,
\One_{N_i < \infty }\lambda^{N_i,N_i}(X_{1,i},\dots,X_{N_i,i})
+ \One_{N_i = \infty } (\Lambda^{N_i}_i)^{\otimes k}
\right\rangle
\right]
\end{eqnarray}
and we conclude using that $k \ge1$ and $g,f_i\in C_b$ are arbitrary, the spaces are Polish, $\lambda^{N_i,N_i}(X_{1,i},\dots,X_{N_i,i})$ depends measurably on $\Lambda^{N_i}_i = \lambda^{N_i,1}(X_{1,i},\dots,X_{N_i,i})$ (see Theorem~\ref{combfact}), and the characteristic property of conditional expectation.
\end{proof}

Thus, the classes of a multi-exchangeable system are \emph{conditionally independent} given $(\Lambda^{N_i}_i)_{1\le i \le C}$, with conditional laws as for an exchangeable system and depending on whether the class is finite or infinite. 
Given $(\Lambda^{N_i}_i)_{1\le i \le C}$ \emph{no further information} can be attained on the law of the multi-exchangeable system by further observation, in particular involving r.v.\ in different classes.
A statistical interpretation is that the empirical measure vector is a 
\emph{sufficient statistic} for the law of the system,
the family of all such laws being trivially parametrized by the laws themselves.

We now extend the convergence results for exchangeable systems in Kallenberg~\cite[Theorem~1.3 p.~25]{Kallenberg}, see also Aldous~\cite[Prop.~7.20 p.~55]{Aldous} when $N_1<\infty$, to a family of multi-exchangeable systems, of fixed possibly both finite and infinite class sizes. 

\begin{theorem}
Let $C \ge 1$ and $N_i \in \mathbb{N} \cup \{\infty\}$ and state spaces $\mathcal{S}_i$ be fixed for $1 \le i \le C$.
For $r \in \mathbb{R}_+ \cup \{\infty\}$ let 
\[
(X^{r}_{n,i})_{1\le n \le N_i,\, 1\le i \le C}\,,
\qquad
X^{r}_{n,i}
\textrm{ with state space $\mathcal{S}_i$}\,,
\]
be \emph{multi-exchangeable} systems as in \eqref{eq:mcys}, and $(\Lambda^{N_i,r}_i)_{1 \le i \le C}$ be as in \eqref{eq:emmeve}. Then
\[
\lim_{r \to \infty} (X^{r}_{n,i})_{1\le n \le N_i,\, 1\le i \le C}
= (X^{\infty}_{n,i})_{n \ge 1\,, 1\le i \le C}
\;\; \textrm{in law} 
\]
if and only if
\[
\lim_{r \to \infty} (\Lambda^{N_i,r}_i)_{1 \le i \le C} = (\Lambda^{N_i,\infty}_i)_{1 \le i \le C}
\;\; \textrm{in law.}
\]
\end{theorem}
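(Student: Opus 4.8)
The plan is to prove both directions by leveraging Theorem~\ref{suffstat}, which expresses the conditional law of the system given the measure vector as an explicit kernel, together with the continuity and approximation facts from Theorems~\ref{combfact} and~\ref{combfactvar}. The key observation is that the law of the finite-dimensional marginals of a multi-exchangeable system is completely determined by the law of the measure vector through an integration against this conditional kernel, and convergence in law of the systems is governed by convergence of these finite-dimensional marginals.

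\medskip

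For the \emph{easy} direction, I would assume the systems converge in law. Since $\Lambda^{N_i,r}_i$ is a continuous (or measurable) function of the system --- the empirical measure when $N_i<\infty$ and the a.s.\ limit~\eqref{dirmeas} when $N_i=\infty$ --- one expects the measure vectors to converge. The subtlety is that $\Lambda^\infty_i$ is not a \emph{continuous} function of the infinite system for the product topology, so the continuous mapping theorem does not apply directly. I would handle this by testing against bounded continuous functionals of the finitely many coordinates and passing to the limit using Theorem~\ref{combfactvar}: for each fixed $k$, the block average ${1\over M}\sum_{n=1}^M \delta_{X^r_{n,i}}$ approximates $\Lambda^\infty_i$ uniformly in $r$ as $M\to\infty$, so the law of $\Lambda^{N_i,r}_i$ is an approximable limit of laws of continuous functionals of the systems, whence convergence transfers.

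\medskip

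For the \emph{harder} direction, I would assume the measure vectors converge in law and aim to show the systems converge. By the Polish setting it suffices to show convergence of all finite-dimensional marginals, i.e.\ convergence of
\[
\EE\Bigl[\prod_{i=1}^C f_i(X^r_{1,i},\dots,X^r_{k_i,i})\Bigr]
\]
for arbitrary $k_i$ and $f_i\in C_b$. Applying \eqref{intfor} from the proof of Theorem~\ref{suffstat} rewrites this expectation as the expectation, under the law of the measure vector, of a product of terms $\langle f_i,\kappa_i(\Lambda^{N_i,r}_i)\rangle$, where $\kappa_i$ sends the measure to the conditional sampling law --- sampling without replacement via $\lambda^{N_i,N_i}$ when $N_i<\infty$, and the $k$-fold tensor power when $N_i=\infty$. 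The crux is that each $\kappa_i$ is a \emph{continuous} function of $\Lambda^{N_i,r}_i$: for finite $N_i$ this is exactly the continuity asserted in Theorem~\ref{combfact}, and for infinite $N_i$ the map $P\mapsto P^{\otimes k}$ is continuous on $\mathcal{P}(\mathcal{S}_i)$. Hence the integrand is a bounded continuous functional of the measure vector, and its expectation passes to the limit by the assumed convergence in law of the vectors.

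\medskip

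The main obstacle, and the step requiring the most care, is the infinite-class case in both directions: reconciling the product topology on the infinite system with the weak topology on the directing measure. The directing measure is an a.s.\ \emph{limit} rather than a continuous function of the coordinates, so I expect the essential work to lie in the uniform approximation argument based on Theorem~\ref{combfactvar} --- bounding, uniformly in $r$, the gap between sampling $k$ times from the finite block empirical measure and the i.i.d.\ sampling from $\Lambda^\infty_i$ --- which lets the $M\to\infty$ limit interchange with the $r\to\infty$ limit and thereby bridges the two topologies cleanly.
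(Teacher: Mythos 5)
Your proposal is correct and takes essentially the same approach as the paper: your ``harder'' direction is verbatim the paper's argument (take $g=1$ in \eqref{intfor} to get \eqref{condlam}, use the continuity of $\mu \mapsto \lambda^{N_i,N_i}$ from Theorem~\ref{combfact} and of $P\mapsto P^{\otimes k}$ for infinite classes, and conclude since $k$ and the $f_i$ are arbitrary and the spaces are Polish). Your ``easy'' direction, via uniform-in-$r$ approximation of the directing measures by block empirical averages using Theorem~\ref{combfactvar}, unpacks exactly the mechanism the paper compresses into one sentence (reusing \eqref{condlam}, whose derivation in Theorem~\ref{suffstat} already performed that uniform $M\to\infty$ step, with suitable test functions $f_i$), the only point both treatments leave implicit being the standard upgrade from convergence of such moment-type functionals to convergence in law of the random measure vector.
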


\begin{proof}
With the notation \eqref{kimi}, let $k \ge1$ and  
$f_i \in C_b(\mathcal{S}_i^{k_i},\mathbb{R})$ for $1\le i \le C$ be arbitrary. Taking $g=1$ in \eqref{intfor}, or conditioning first on $(\Lambda^{N_i,r}_i)_{1 \le i \le C}$ and using Theorem~\ref{suffstat}, yields 
\begin{eqnarray}
\label{condlam}
&&\kern-6mm
\EE\left[
\prod_{i=1}^C 
f_i(X^{r}_{1,i}, \ldots X^{r}_{k_i,i})
\right]
\nonumber\\
&&{}
=
\EE\!\left[
\prod_{i=1}^C
\left\langle f_i,
\One_{N_i < \infty }\lambda^{N_i,N_i}(X^{r}_{1,i},\dots,X^{r}_{N_i,i})
+ \One_{N_i = \infty } (\Lambda^{N_i,r}_i)^{\otimes k}
\right\rangle
\right]
\end{eqnarray}
and hence if
$
\lim_{r \to \infty} (\Lambda^{N_i,r}_i)_{1 \le i \le C} = (\Lambda^{N_i,\infty}_i)_{1 \le i \le C}
$
in law then
\[
\lim_{r \to \infty} 
\EE\left[
\prod_{i=1}^C 
f_i(X^{r}_{1,i}, \ldots X^{r}_{k_i,i})
\right]
= 
\EE\left[
\prod_{i=1}^C 
f_i(X^{\infty}_{1,i}, \ldots X^{\infty}_{k_i,i})
\right]
\]
using the continuity result in Theorem~\ref{combfact}, which proves the ``if'' part of the theorem since $k \ge1$ and $f_i$ are arbitrary and the state spaces are Polish. The converse follows similarly from \eqref{condlam} with functions $f_i$ depending only on the first variable.
\end{proof}

%%%%%%%%

\end{document}